\documentclass[11pt,final]{article}

\usepackage{amsmath}
\usepackage{amsthm}
\usepackage{amssymb}
\usepackage{color}

\numberwithin{equation}{section}

\theoremstyle{definition}\newtheorem{definition}{Definition}
\theoremstyle{plain}\newtheorem{theorem}[definition]{Theorem}
\theoremstyle{plain}\newtheorem{proposition}[definition]{Proposition}
\theoremstyle{plain}\newtheorem{corollary}[definition]{Corollary}
\theoremstyle{plain}\newtheorem{lemma}[definition]{Lemma}
\theoremstyle{definition}
\theoremstyle{definition}
\theoremstyle{definition}
\theoremstyle{definition}\newtheorem{remark}[definition]{Remark}

\newcommand{\cR}{{\mathcal R}}
\newcommand{\cN}{{\mathcal N}}
\newcommand{\bbN}{\mathbb{N}}

\begin{document}

\title{Duality of uncomplemented null-space and non-closed range and its impact on the regularization of Mazur-type operators}

\author{
\textsc{Jens Flemming}%
\footnote{HTWD -- University of Applied Sciences, Faculty of Informatics/Mathematics, 01069 Dresden, Germany, e-mail: jens.flemming@htw-dresden.de}
~and
\textsc{Bernd Hofmann}%
\footnote{Chemnitz University of Technology, Faculty of Mathematics, 09107 Chemnitz, Germany, e-mail: bernd.hofmann@mathematik.tu-chemnitz.de}
}

\date{\today\\~\\
\small\textbf{Key words:} Linear ill-posed problems, non-closed range, uncomplemented null-space, regularization, Mazur-type operators
}

\maketitle

\begin{abstract}
Up to now operators with uncomplemented null-space have been playing only a minor role in regularization theory for ill-posed inverse problems in Banach spaces. We show that ill-posedness due to an uncomplemented null-space and ill-posedness due to a non-closed range can be regarded as dual concepts and we may switch between both views at will by restricting or extending the ill-posed operator under consideration in a suitable well-defined way. We apply the duality result to Mazur-type operators, which are a class of bounded linear operators with uncomplemented null-space. Mazur-type operators in their natural form are not accessible to Tikhonov-type regularization, but after transfering ill-posedness to the range Tikhonov regularization can be applied. On the other hand, the null-space view of ill-posedness may open up an alternative path for developing new regularization methods for operators ill-posed in the classical sense, that is, for operators ill-posed due to a non-closed range.
\end{abstract}

\section{Motivation and preliminaries}

We are concerned with linear operator equations
\begin{equation}
A\,x\,=\,y\,,
\end{equation}
where $A:X\to Y$ is a bounded linear operator mapping between Banach space $X$ and $Y$. Here, $y\in Y$ is given and $x\in X$ is being sought. Of particular interest are operators $A$ lacking a continuous (generalized) inverse. To solve such \emph{ill-posed} operator equations some kind of regularization technique has to be applied, cf., e.g.,~\cite{SchKalHofKaz12} and \cite{Scherzetal09}. The following definition plays an import role in discussing invertibility of $A$.

\begin{definition}\label{def:complemented}
A closed subspace $U\subseteq X$ of a Banach space $X$ is \emph{complemented} if there exists another closed subspace $V\subseteq X$ such that
\begin{equation}\label{eq:complemented}
U\cap V=\{0\}\quad\text{and}\quad U+V=X.
\end{equation}
Otherwise, $U$ is \emph{uncomplemented}.
\end{definition}

Note that for each closed subspace $U\subseteq X$ there is always another (possibly non-closed) subspace $V\subseteq X$ such that \eqref{eq:complemented} holds true. The difficulty lies in finding a closed subspace $V$. The Lindenstrauss-Tzafiri Theorem \cite{LinTza71} states that a Banach space contains uncomplemented subspaces if and only if it is not isomorphic to a Hilbert space.

If $A$ is injective, then the inverse $A^{-1}:\cR(A)\to X$ is well-defined on the range $\cR(A)$ of $A$ and the inverse is bounded if and only if $\cR(A)$ is a closed subset of $Y$. In case of closed range, the inverse may be extended to a bounded linear \emph{generalized inverse} $A^\dagger:Y\to X$ if the range $\cR(A)$ is complemented. But we do not consider such generalizations here, instead we refer the reader to the original work \cite{NasVot76}. Note that the closure of $\cR(A)$ is a Banach space on its own. So without loss of generality we may assume that either $\cR(A)$ is dense in $Y$ or $\cR(A)=Y$.

If $A$ is not injective, that is $A$ has a non-trivial null-space $\cN(A)\neq\{0\}$, we may take some (possibly non-closed) subspace $U$ such that \eqref{eq:complemented} holds with $V:=\cN(A)$. Then the restriction $A|_U:U\to Y$ is injective and has range $\cR(A|_U)=\cR(A)$. Thus, the discussion above applies to $A|_U$. But it turns out that even for a closed range $\cR(A)$ the corresponding inverse $(A|_U)^{-1}$ may be unbounded. In \cite[Proposition~1.11]{Flemmingbuch18} it has been shown that
$(A|_U)^{-1}$ is bounded if and only if $U$ is a complement of $\cN(A)$, that is, if and only of $U$ is closed. Consequently, non-closed range $\cR(A)$ and uncomplemented null-space $\cN(A)$ are two sources of ill-posedness that appear independent of one another at first glance.

Operators with uncomplemented null-space have been playing almost no role in the theory of ill-posed inverse problems during past decades. Results on ill-posedness phenomena and suitable regularization techniques concentrated on the situation of non-closed range, although uncomplemented null-spaces (and also uncomplemented ranges) have already been mentioned in \cite{NasVot76,Nas87}. There, the possible presence of uncomplemented subspaces has been incorporated into the definition of ill-posedness. That definition is discussed and slightly modified in the recent articles \cite{HofKin26,FleHof26} in the light of uncomplemented null-spaces and and strictly singular operators.

In the present article's main section, Section~\ref{sec:duality}, we show that ill-posedness in the sense of lacking boundedness of (generalized) inverses resulting from a non-closed range is essentially the same as ill-posedness resulting from an uncomplemented null-space. Hence, both sources of ill-posedness are anything but independent, however they can be regarded as concepts dual to each other. Operators with uncomplemented null-space may be transformed into operators with non-closed range and vice verse.

On the one hand, our results provide a path for applying well-known regularization methods to operators with uncomplemented null-space (by transfering ill-posedness to the range). We illustrate this approach in Section~\ref{sec:mazur} with Mazur-type operators. On the other hand, our results may motivate new regularization methods by transfering ill-posedness to the null-space and developing regularization techniques for such situations.

\section{Duality of uncomplemented null-space and non-closed range}\label{sec:duality}

Following definition was introduced in \cite{Mur45}.

\begin{definition}\label{def:quasi}
A closed subspace $U\subseteq X$ of a Banach space $X$ is \emph{quasi-complemented} if there exists a closed subspace $V\subseteq X$ such that
\begin{equation}\label{eq:quasi}
U\cap V=\{0\}\quad\text{and}\quad U+V\text{ is dense in }X.
\end{equation}
\end{definition}

In contrast to complementedness, quasi-complementedness is a rather weak property. In \cite{Mac46} it was shown that in a separable Banach space each closed subspace is quasi-complemented.

The following theorem holds, in particular, for operators $B$ with uncomplemented null-space. Note that assumed quasi-complementedness of the null-space $\cN(B)$ is guaranteed if $X$ is separable.

\begin{theorem}\label{th:restriction}
Let $B:X\to Y$ be a bounded operator between Banach spaces $X$ and $Y$ and let $U\subseteq X$ be a quasi-complement of the null-space $\cN(B)$.
Then the restriction $A:=B|_U$ is injective and the range $\cR(A)$ of $A$ is dense in the range $\cR(B)$ of $B$.
\end{theorem}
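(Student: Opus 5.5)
Injectivity comes straight from the definition of a quasi-complement, and density of the range comes from continuity of $B$. We split the statement into these two parts and treat them in order.

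\textbf{Injectivity.} Let $x\in U$ with $Ax=0$. Then $Bx=0$, so $x\in\cN(B)\cap U$. By Definition~\ref{def:quasi} this intersection is $\{0\}$, hence $x=0$ and $A$ is injective.

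\textbf{Density of the range.} Fix $y\in\cR(B)$ and choose $x\in X$ with $Bx=y$. Since $\cN(B)+U$ is dense in $X$, there are sequences $(n_k)\subseteq\cN(B)$ and $(u_k)\subseteq U$ with $n_k+u_k\to x$. Continuity of $B$ then gives
\[
Au_k=Bu_k=B(n_k+u_k)\to Bx=y .
\]
So every element of $\cR(B)$ is a limit of elements of $\cR(A)$. Since $\cR(A)\subseteq\cR(B)$ trivially, $\cR(A)$ is dense in $\cR(B)$ with respect to the norm of $Y$.

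The only step needing any care is the density part. The key point is that $B$ annihilates the $\cN(B)$-component of the approximants, so only the $U$-component contributes to $Bu_k$. Boundedness of $B$ is essential here: it is what lets density of $\cN(B)+U$ in $X$ carry over to density of the image in $\cR(B)$. Neither closedness of $U$ nor any complementedness property is used beyond the conditions in \eqref{eq:quasi}.
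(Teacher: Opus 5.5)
Your proof is correct and follows essentially the same route as the paper's. Injectivity comes from $\cN(B)\cap U=\{0\}$, and density comes from approximating a preimage $x$ by $n_k+u_k\in\cN(B)+U$ and using continuity of $B$ to get $A u_k=B(n_k+u_k)\to Bx$.
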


\begin{proof}
We have $\cN(B)\cap U=\{0\}$ and $\cN(B)+U$ is dense in $X$. The former implies that $A$ is injective. The latter, for each $x\in X$ yields sequences $(z^{(k)})_{k\in\bbN}$ in $\cN(B)$ and $(u^{(k)})_{k\in\bbN}$ in $U$ with $z^{(k)}+u^{(k)}\to x$ for $k\to\infty$. From $B\,(z^{(k)}+u^{(k)})=B\,u^{(k)}=A\,u^{(k)}$ we see $A\,u^{(k)}\to B\,x$ if $k\to\infty$, that is, the range of $A$ is dense in the range of $B$.
\end{proof}

\begin{corollary}\label{th:cor_restriction}
Let $X$ be separable. Then for each surjective bounded linear operator $B:X\to Y$ there is a closed subspace $U\subseteq X$, such that the restriction $A:=B|_U$ is injective and the range $\cR(A)$ is dense in $Y$.
\end{corollary}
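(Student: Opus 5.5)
The corollary follows by combining Theorem~\ref{th:restriction} with the result of \cite{Mac46} that, in a separable Banach space, every closed subspace is quasi-complemented. I would first note that the null-space $\cN(B)$ is a closed subspace of $X$, since $B$ is bounded and $\cN(B)=B^{-1}(\{0\})$ is the preimage of a closed set. Because $X$ is separable, \cite{Mac46} gives a closed subspace $U\subseteq X$ such that
\begin{equation*}
\cN(B)\cap U=\{0\}\quad\text{and}\quad \cN(B)+U \text{ is dense in } X,
\end{equation*}
so $U$ is a quasi-complement of $\cN(B)$ in the sense of Definition~\ref{def:quasi}.

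Next I would apply Theorem~\ref{th:restriction} with this $U$. It says that $A:=B|_U$ is injective and that $\cR(A)$ is dense in $\cR(B)$. Since $B$ is surjective, $\cR(B)=Y$, so $\cR(A)$ is dense in $Y$. The subspace $U$ is closed by construction, as the statement requires.

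The only real content is the quasi-complement existence theorem, which we take from \cite{Mac46}. Nothing else needs care, apart from two degenerate cases. If $\cN(B)=\{0\}$, we can take $U=X$. If $\cN(B)=X$, then $Y=\{0\}$ and $U=\{0\}$ works. Both cases are already covered by the general statement, so no separate treatment is needed.
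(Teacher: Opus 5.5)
Your proposal is correct and matches the paper's proof. Mackey's theorem gives a (closed) quasi-complement $U$ of the closed subspace $\cN(B)$ in the separable space $X$, and Theorem~\ref{th:restriction} together with $\cR(B)=Y$ then yields that $A=B|_U$ is injective with $\cR(A)$ dense in $Y$.
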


\begin{proof}
Separability of $X$ yields a quasi-complement $U$ of $\cN(B)$. Thus, Theorem~\ref{th:restriction} applies.
\end{proof}

If $B$ in the corollary is ill-posed due to an uncomplemented null-space, then the corollary yields an, up to some extent, equivalent operator $A$ with possibly non-closed range. Thus, ill-posedness due to an uncomplemented null-space is transfered to ill-posedness due to a non-closed range. That $A$ contains all essential information from $B$ can be seen from the fact that $\cN(B)+U$ is dense in $X$, that is, we only `loose' the null-space of $B$.

\begin{proposition}\label{th:restriction_compl}
Let $B$, $U$, $A$ be as in Corollary~\ref{th:cor_restriction}. The range $\cR(A)$ is closed if and only if $U$ is a complement of the null-space $\cN(B)$.
\end{proposition}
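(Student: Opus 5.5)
Since $B$ is surjective, $U$ is a closed subspace with $U\cap\cN(B)=\{0\}$, and $A=B|_U$ is injective with dense range by Corollary~\ref{th:cor_restriction}, I would prove the two directions separately, using the open mapping theorem in both.

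For the direction ``$U$ complement $\Rightarrow$ $\cR(A)$ closed'', assume $U+\cN(B)=X$. Given $y\in Y$, surjectivity of $B$ yields $x\in X$ with $Bx=y$. We write $x=z+u$ with $z\in\cN(B)$ and $u\in U$, so that $Au=Bu=Bx=y$. Hence $\cR(A)=Y$, which is closed.

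For the converse, assume $\cR(A)$ is closed. Since $\cR(A)$ is dense in $Y$, this forces $\cR(A)=Y$. Given $x\in X$, choose $u\in U$ with $Au=Bx$. Then $B(x-u)=0$, so $x=(x-u)+u\in\cN(B)+U$. Thus $\cN(B)+U=X$. Together with $U\cap\cN(B)=\{0\}$ and the closedness of both subspaces, this shows that $U$ is a complement of $\cN(B)$ in the sense of Definition~\ref{def:complemented}.

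Neither direction actually needs the open mapping theorem, since Definition~\ref{def:complemented} only requires closed subspaces with trivial intersection and full sum. The only point needing care is the step ``closed and dense implies equal to $Y$''. This uses the surjectivity of $B$ in the hypotheses of Corollary~\ref{th:cor_restriction}, which guarantees that the density of $\cR(A)$ holds in all of $Y$ and not merely in $\cR(B)$. Without surjectivity, one would have to replace $Y$ by $\cR(B)$, and closedness would have to be understood relative to $\cR(B)$. I expect no real obstacle beyond stating this bookkeeping clearly. As an optional remark, one can add via the open mapping theorem that in the complemented case $A^{-1}$ is bounded, which is consistent with \cite[Proposition~1.11]{Flemmingbuch18}.
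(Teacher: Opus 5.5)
Your proof is correct and follows the paper's argument essentially verbatim: closedness plus density gives $\cR(A)=Y$, after which $x=(x-u)+u$ yields $X=\cN(B)+U$, and conversely the decomposition $x=z+u$ gives $Au=Bx$, hence $\cR(A)=\cR(B)=Y$. The only blemish is your opening announcement that the open mapping theorem is used in both directions, which you then correctly retract, so simply delete it.
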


\begin{proof}
If $\cR(A)$ is closed, that is $\cR(A)=Y$, take $u\in U$ such that $A\,u=B\,x$ for some arbitrary $x\in X$. Then $x=(x-u)+u$ shows that $U$ is a complement of $\cN(B)$.
\par
If, on the other hand, $U$ is a complement of $\cN(B)$, we find $z\in\cN(B)$ and $u\in U$ with $x=z+u$ for some arbitrary $x\in X$. Then $A\,u=B\,(z+u)=B\,x$ shows that $\cR(A)=\cR(B)=Y$.
\end{proof}

In other words, the proposition states that in Corollary~\ref{th:cor_restriction} ill-posedness due to an uncomplemented null-space occurs for $B$ if and only if ill-posedness due to a non-closed range occurs for $A$.

\begin{remark}\label{rm:compl}
From Proposition~\ref{th:restriction_compl} we cannot deduce that a complemented $\cN(B)$ implies a closed $\cR(A)$ for all quasi-complements $U$ of $\cN(B)$. We only get existence of at least one complement $U$ of $\cN(B)$ such that the corresponding operator $A$ has closed range. For other quasi-complements, $\cR(A)$ may be non-closed although $\cN(B)$ is complemented.
The other way round, if we know that $\cR(A)$ is non-closed for a specific choice of $U$, we cannot deduce that $\cN(B)$ is uncomplemented. Perhaps there is another subspace $U$ that is a complement of $\cN(B)$ (and corresponding $A$ has closed range).
Note that even in Hilbert spaces closed subspaces may have quasi-complements which aren't complements (see \cite[Section~I.15]{Hal51} for an example).
\end{remark}

The next theorem is the converse result to Corollary~\ref{th:cor_restriction}. The assumptions could be given in simpler form but we keep them quite general to maintain a high degree of symmetry to Corollary~\ref{th:cor_restriction}. For instance, the assumptions are satisfied if $W:=Y$, $C$ is the identity mapping in $Y$, and $\tilde{X}:=Y\times X$ with norm $\|(w,u)\|_{\tilde{X}}:=\|w\|_W+\|x\|_X$.
The proof of the theorem relies on the Bartle-Graves Theorem \cite[Cor.\ 5J.4]{DonRoc14} stated in the following Lemma.

\begin{lemma}\label{th:bartle_graves}
Let $B:\tilde{X}\to Y$ be a surjective bounded linear operator between arbitrary Banach spaces $\tilde{X}$ and $Y$. Then there exists a continuous (and possibly non-linear) mapping $B^\dagger:Y\to\tilde{X}$ such that $B\,B^\dagger(y)=y$ for all $y\in Y$.
\end{lemma}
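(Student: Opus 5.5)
The plan is to follow the classical Michael-selection route, since the lemma is the Bartle--Graves theorem. First we use the open mapping theorem. Because $B$ is a surjective bounded linear operator between Banach spaces, there is a constant $c>0$ such that for every $y\in Y$ some $x\in\tilde{X}$ satisfies $Bx=y$ and $\|x\|_{\tilde{X}}\le c\,\|y\|_Y$. Equivalently, the induced map $\hat B:\tilde{X}/\cN(B)\to Y$ is an isomorphism. So the set-valued map $F(y):=B^{-1}(\{y\})=x_y+\cN(B)$ has nonempty, closed, convex values.

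The second step is to show that $F$ is lower semicontinuous. Fix $y_0$, $x_0\in F(y_0)$ and an open set $O\ni x_0$. For $y$ near $y_0$, the open mapping bound gives some $h$ with $Bh=y-y_0$ and $\|h\|\le c\|y-y_0\|$. Then $x_0+h\in F(y)\cap O$ once $\|y-y_0\|$ is small. In fact $F$ is Lipschitz in the Hausdorff sense with constant $c$.

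The third step is to invoke Michael's selection theorem. $Y$ is a metric space, hence paracompact, and $\tilde{X}$ is a Banach space. A lower semicontinuous map with nonempty closed convex values therefore admits a continuous selection $B^\dagger:Y\to\tilde{X}$ with $B^\dagger(y)\in F(y)$, that is, $B\,B^\dagger(y)=y$ for all $y$.

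If a self-contained argument is preferred, I would replace Michael's theorem by the successive-approximation construction:
\begin{enumerate}
\item Use a partition of unity on $Y$ to build a continuous $g_1$ with $\|B g_1(y)-y\|\le \tfrac12\|y\|$ and $\|g_1(y)\|\le 2c\|y\|$. This uses the fact that $y\mapsto B^{-1}$ has convex values, so local choices can be averaged.
\item Iterate on the residual $y-Bg_1(y)$ to obtain the terms of a geometric series.
\item Sum the series, which converges uniformly, to get the continuous right inverse.
\end{enumerate}
The main obstacle is exactly this continuous averaging step. The averaging preserves the error bound only because the approximation error $\|Bx-y\|$ is a convex function of $x$ and the local pieces are chosen with controlled norms. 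Checking uniform control of the norms through the iteration is the delicate part. Otherwise the argument is routine, and since the paper cites \cite[Cor.\ 5J.4]{DonRoc14}, quoting Michael's theorem suffices.
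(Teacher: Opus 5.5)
Your proof is correct. The open mapping theorem makes $y\mapsto B^{-1}(\{y\})$ a lower semicontinuous (indeed Hausdorff--Lipschitz) map on the paracompact space $Y$ with nonempty closed convex values, and Michael's selection theorem then yields the continuous right inverse. The paper gives no argument of its own beyond citing Dontchev--Rockafellar, and their Bartle--Graves corollary rests on this same Michael-selection route, so your approach is essentially the paper's. Your fallback iteration also works; note that its series converges uniformly only on bounded sets, but that suffices for continuity.
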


\begin{proof}
See \cite[Cor.\ 5J.4]{DonRoc14}.
\end{proof}

\begin{theorem}\label{th:extension}
Let $A:X\to Y$ be an injective bounded linear operator between Banach spaces $X$ and $Y$ with dense range $\cR(A)$. Further let $\tilde{X}=W+U$ be the direct sum of a closed subspace $W$, for which there exists a surjective bounded linear operator $C:W\to Y$ and a closed subspace $U$ isomorphic to $X$ with isomorphism $T:X\to U$. Then there exists a surjective bounded linear operator $B:\tilde{X}\to Y$ such that $A=B\,T$ and $U$ is a quasi-complement of $\cN(B)$ in $\tilde{X}$.
\end{theorem}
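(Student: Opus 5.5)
The plan is to define $B$ componentwise on the direct sum, using $C$ on $W$ and $A\,T^{-1}$ on $U$, and then check the required properties one by one. Write each $\tilde{x}\in\tilde{X}$ uniquely as $\tilde{x}=w+u$ with $w\in W$, $u\in U$, and set
\begin{equation*}
B\,(w+u):=C\,w+A\,T^{-1}u .
\end{equation*}
This map is clearly linear.

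\emph{Boundedness and the identity $A=B\,T$.} Because $W$ and $U$ are closed and $\tilde{X}=W+U$ is a direct sum in the Banach space $\tilde{X}$, the projections $P_W\colon w+u\mapsto w$ and $P_U\colon w+u\mapsto u$ are bounded. This follows from the closed graph theorem, or from the open mapping theorem applied to $(w,u)\mapsto w+u$ on $W\times U$. Hence $B=C\,P_W+A\,T^{-1}P_U$ is bounded. Surjectivity comes from $C$: already $B(W)=C(W)=Y$. For $x\in X$ we have $T\,x\in U$, so $B\,T\,x=A\,x$, i.e.\ $A=B\,T$.

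\emph{$U\cap\cN(B)=\{0\}$.} If $u\in U$ satisfies $B\,u=A\,T^{-1}u=0$, then injectivity of $A$ gives $T^{-1}u=0$, hence $u=0$.

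\emph{Density of $\cN(B)+U$ (main step).} Let $\tilde{x}=w+u$ be arbitrary. Since $u\in U$, it suffices to approximate $w$ by elements of $\cN(B)+U$. Density of $\cR(A)$ yields $x_k\in X$ with $A\,x_k\to C\,w$.

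The naive choice $w-T\,x_k$ is not in $\cN(B)$, because
\begin{equation*}
B\,(w-T\,x_k)=C\,w-A\,x_k\neq 0
\end{equation*}
in general; this is only small, not zero. The key point is to correct it by a small element of $W$. By Lemma~\ref{th:bartle_graves} applied to the surjection $C\colon W\to Y$, there is a continuous map $C^\dagger$ with $C\,C^\dagger=\mathrm{id}$. Replacing it by $y\mapsto C^\dagger(y)-C^\dagger(0)$ keeps $C\,C^\dagger(y)=y$ and gives $C^\dagger(0)=0$. Alternatively, the open mapping theorem gives a constant $M$ and preimages of norm at most $M\|y\|$.

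Set $e_k:=C^\dagger(C\,w-A\,x_k)\in W$. Then $e_k\to 0$ by continuity of $C^\dagger$ at $0$, and $C\,e_k=C\,w-A\,x_k$. Define
\begin{equation*}
z_k:=w-e_k-T\,x_k .
\end{equation*}
Then
\begin{equation*}
B\,z_k=C\,w-C\,e_k-A\,x_k=0,
\end{equation*}
so $z_k\in\cN(B)$. Moreover
\begin{equation*}
z_k+T\,x_k+u=\tilde{x}-e_k\to\tilde{x}.
\end{equation*}
Since $T\,x_k+u\in U$, this shows that $\cN(B)+U$ is dense in $\tilde{X}$. Hence $U$ is a quasi-complement of $\cN(B)$, which completes the argument.
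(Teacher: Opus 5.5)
Your proof is correct and follows essentially the same route as the paper. It uses the same operator $B(w+u)=C\,w+A\,T^{-1}u$, bounded projections for continuity, and injectivity of $A$ for $\cN(B)\cap U=\{0\}$. Density comes, as in the paper, from approximating through $\cR(A)$ and correcting by a vanishing preimage.

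The differences are minor. You apply the Bartle--Graves selection to $C$, normalized so that $C^\dagger(0)=0$, whereas the paper applies it to $B$ through the difference $B^\dagger(B\,u^{(m)})-B^\dagger(B\,(w+u))$. Your remark that the open mapping theorem alone suffices, giving preimages with $\|e_k\|\le M\,\|C\,w-A\,x_k\|$, shows that the nonlinear Bartle--Graves lemma is not actually needed for this theorem.
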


\begin{proof}
For $w+u\in\tilde{X}$ with $w\in W$ and $u\in U$ set
\begin{equation}
B\,(w+u):=C\,w+A\,T^{-1}\,u.
\end{equation}
Then $B:\tilde{X}\to Y$ is a well-defined surjective linear operator and $A=B\,T$. For proving boundedness of $B$, observe that $W$ and $U$ are a pair of complemented subspaces. Thus, the projection mappings $w+u\mapsto w$ and $w+u\mapsto u$ are continuous (cf.\ \cite[Theorem~3.2.14]{Meg98}) and we obtain
\begin{align}
\|B\,(w+u)\|_{Y}
&\leq\|C\|\,\|w\|_{\tilde{X}}+\|A\,T^{-1}\|\,\|u\|_{\tilde{X}}\\
&\leq\max\bigl\{c_1\,\|C\|,\;c_2\,\|A\,T^{-1}\|\bigr\}\,\|w+u\|_{\tilde{X}}
\end{align}
with constants $c_1>0$, $c_2>0$.
\par
Injectivity of $A$ ensures $\cN(B)\cap U=\{0\}$. To show denseness of $\cN(B)+U$ take $w+u\in\tilde{X}$ with $w\in W$ and $u\in U$. By the denseness of $\cR(A)$ in $Y$ there exists a sequence $(u^{(m)})_{m\in\bbN}$ in $U$ with $B\,u^{(m)}\to B\,(w+u)$ if $m\to\infty$. Set
\begin{equation}
\tilde{x}^{(m)}:=w+u-B^\dagger(B\,(w+u))-\bigl(u^{(m)}-B^\dagger(B\,u^{(m)})\bigr),
\end{equation}
where $B^\dagger:Y\rightarrow\tilde{X}$ is the continuous mapping from \ref{th:bartle_graves}. Obviously, $\tilde{x}^{(m)}\in\cN(B)$. Further,
\begin{equation}
\tilde{x}^{(m)}+u^{(m)}-(w+u)=B^\dagger(B\,u^{(m)})-B^\dagger(B\,(w+u))\to 0
\end{equation}
if $m\to\infty$ by continuity of $B^\dagger$, that is, $\tilde{x}^{(m)}+u^{(m)}\to w+u$.
\end{proof}

If $A$ in the theorem is ill-posed due to a non-closed range $\cR(A)\neq Y$, the theorem yields an extension $B$ with closed range and possibly uncomplemented null-space. Thus, ill-posedness of $A$ due to a non-closed range is transfered to ill-posedness of $B$ due to an uncomplemented null-space. That $B$ does not `add' more than necessary can be seen from the fact that $X$ is isomorphic to a quasi-complement $U$ of $\cN(B)$. This also makes clear that the choice of $C$ in the theorem does not matter. $C$ is an arbitrary surjective operator without any connection to $A$ and only serves as a tool for `filling the gap' between $\cR(A)$ and $Y$ by enlarging the null-space.

\begin{proposition}\label{th:extension_compl}
Let $A$, $B$, $C$, $U$ be as in Theorem~\ref{th:extension}.
The range $\cR(A)$ is closed if and only if $U$ is a complement of the null-space $\cN(B)$.
Further, $\cN(B)$ is complemented in $\tilde{X}$ if $\cN(C)$ is complemented in $W$.
\end{proposition}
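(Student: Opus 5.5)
For the first claim I would argue directly from the construction $B(w+u)=Cw+AT^{-1}u$ in Theorem~\ref{th:extension}, using that $\cR(T)=U$ and $A=BT$, so that $\cR(A)=\cR(B|_U)$. If $U$ is a complement of $\cN(B)$, then every $\tilde{x}\in\tilde{X}$ splits as $\tilde{x}=z+u$ with $z\in\cN(B)$ and $u\in U$. Hence $B\tilde{x}=Bu$, and so $\cR(A)=\cR(B)=Y$, which is closed. Conversely, suppose $\cR(A)$ is closed. Since it is dense, $\cR(A)=Y$. For arbitrary $\tilde{x}$ I pick $x\in X$ with $Ax=B\tilde{x}$ and set $u:=Tx\in U$. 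Then $\tilde{x}=(\tilde{x}-u)+u$ with $\tilde{x}-u\in\cN(B)$. Together with $\cN(B)\cap U=\{0\}$ from Theorem~\ref{th:extension}, this shows $\cN(B)+U=\tilde{X}$ as a direct sum. Both summands are closed ($\cN(B)$ because $B$ is bounded, $U$ by assumption), so $U$ is a complement. This mirrors the proof of Proposition~\ref{th:restriction_compl}.

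For the second claim, let $V\subseteq W$ be a closed complement of $\cN(C)$ in $W$. The idea is to show that $V$ itself is a complement of $\cN(B)$ in $\tilde{X}$. It is closed in $W$, hence in $\tilde{X}$.

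\emph{Trivial intersection.} If $v\in V\cap\cN(B)$, then $0=Bv=Cv$, so $v\in\cN(C)\cap V=\{0\}$.

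\emph{Sum.} Given $w+u\in\tilde{X}$, the restriction $C|_V:V\to Y$ is a bijection: it is injective by the intersection argument, and surjective because $C(\cN(C)+V)=C(V)$ and $C$ is onto. So I can choose $v\in V$ with $Cv=B(w+u)$. Then $w+u-v\in\cN(B)$, which gives $\tilde{X}=\cN(B)+V$.

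The only subtle point is making sure closedness of $V$ is inherited in $\tilde{X}$, which is immediate since $W$ is closed. Continuity of the inverse of $C|_V$ is not even needed, because Definition~\ref{def:complemented} only requires closed algebraic complements. I therefore expect no real obstacle; the main care is keeping track that $\cR(A)$ dense gives $\cR(A)=Y$ when closed.
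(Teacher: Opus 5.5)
Your proof is correct and follows the paper's route. For the first claim, the paper applies Proposition~\ref{th:restriction_compl} to $B|_U$ and transfers closedness through the isomorphism $T$; you carry out exactly that argument inline, which has the small merit of not formally relying on the separability hypothesis of Corollary~\ref{th:cor_restriction}. For the second claim, you use the same complement $V$ of $\cN(C)$ and the same intersection argument, and your choice of $v\in V$ with $Cv=B(w+u)$ via surjectivity of $C|_V$ is just a compressed form of the paper's decomposition $\tilde{x}=(\tilde{w}+u+z-\tilde{z})+(v-\tilde{v})$.
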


\begin{proof}
We may apply Corollary~\ref{th:cor_restriction} to $B$ and $U$ to obtain an operator $\tilde{A}=B|_U:U\to Y$. By Proposition~\ref{th:restriction_compl} its range $\cR(\tilde{A})$ is closed if and only if $U$ is a complement of $\cN(B)$. From $A=\tilde{A}\,T$ with the isomorphism $T$ from Theorem~\ref{th:extension} we see that $\cR(A)$ is closed if and only if $\cR(\tilde{A})$ is closed.
\par
If $\cN(C)$ is complemented in $W$, we find a closed subspace $V$ in $W$ such that $\cN(C)\cap V=\{0\}$ and $\cN(C)+V=W$. We show that $V$ also is a complement of $\cN(B)$.
Let $\tilde{x}\in\cN(B)\cap V$. Then $\tilde{x}\in W$ and $C\,\tilde{x}=B\,\tilde{x}=0$, that is, $\tilde{x}\in\cN(C)\cap V$, showing $\tilde{x}=0$.
Now take arbitrary $\tilde{x}\in\tilde{X}$. Then there are $w\in W$ and $u\in U$ such that $\tilde{x}=w+u$. Further, there are $z\in\cN(C)$ and $v\in V$ such that $w=z+v$. Surjectivity of $C$ guarantees existence of some $\tilde{w}\in W$ with $C\,\tilde{w}=-A\,u$, that is, with $\tilde{w}+u\in\cN(B)$. For $\tilde{w}$ we find $\tilde{z}\in\cN(C)$ and $\tilde{v}\in V$ such that $\tilde{w}=\tilde{z}+\tilde{v}$. Thus, we have
\begin{equation}
\tilde{x}=z+v+u=\tilde{w}+u+z-\tilde{z}+v-\tilde{v}
\end{equation}
with $\tilde{w}+u+z-\tilde{z}\in\cN(B)$ (note that $\cN(C)\subseteq\cN(B)$) and $v-\tilde{v}\in V$. This shows $\tilde{X}=\cN(B)+V$.
\end{proof}

Analogously to Proposition~\ref{th:restriction_compl}, Proposition~\ref{th:extension_compl} states that in Theorem~\ref{th:extension} ill-posedness due to an uncomplemented null-space occurs for $B$ if and only if ill-posedness due to a non-closed range occurs for $A$.

Note that the Proposition~\ref{th:extension_compl} does not state, that $\cN(C)$ is complemented if $\cN(B)$ is complemented. The authors believe that uncomplemented $\cN(C)$ is possible even if $\cN(B)$ is complemented. But a rigorous proof is missing.

\section{Regularization of Mazur-type operators}\label{sec:mazur}

In \cite[page 111, item (e)]{BanMaz33} Banach and Mazur introduced a class of bounded linear operators mapping the sequence space $\ell^1$ onto a separable Banach space $Y$.

\begin{definition}\label{def:mazur}
A bounded linear operator $B:\ell^1\to Y$ mapping into a separable Banach space $Y$ is of \emph{Mazur-type} if there is a sequence $(\zeta^{(k)})_{k\in\bbN}$ in $Y$ which is dense in the unit sphere and satisfies
\begin{equation}
B\,x=\sum_{k=1}^\infty x_k\,\zeta^{(k)}\quad\text{for all }x=(x_1,x_2,\ldots)\in\ell^1.
\end{equation}
\end{definition}

Mazur-type operators obviously are well-defined and bounded linear operators. Note that each separable Banach space $Y$ and each choice of the sequence $(\zeta^{(k)})_{k\in\bbN}$, for fixed $Y$, yield different Mazur-type operators.

Mazur-type operators are surjective and their null-spaces are almost always uncomplemented.

\begin{proposition}
Let $B:\ell^1\to Y$ be a Mazur-type operator with defining sequence $(\zeta^{(k)})_{k\in\bbN}$. Then $B$ is surjective and the null-space $\cN(B)$ is complemented if and only if $Y$ is isomorphic to $\ell^1$.
\end{proposition}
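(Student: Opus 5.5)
Surjectivity follows from a standard successive approximation argument. Fix $y\in Y$ with $y\neq 0$; by homogeneity we may take $\|y\|=1$. Density of $(\zeta^{(k)})$ in the unit sphere yields an index $k_1$ with $\|y-\zeta^{(k_1)}\|\le 1/2$. We then set $r_1:=y-\zeta^{(k_1)}$. If $r_1\neq 0$, we choose $k_2$ with $\bigl\|r_1/\|r_1\|-\zeta^{(k_2)}\bigr\|\le 1/2$, so that $r_2:=r_1-\|r_1\|\,\zeta^{(k_2)}$ satisfies $\|r_2\|\le \|r_1\|/2\le 1/4$. Iterating gives coefficients $c_j$ with $|c_j|\le 2^{-(j-1)}$ and $\|y-\sum_{j\le n}c_j\zeta^{(k_j)}\|\le 2^{-n}$. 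The indices $k_j$ may repeat, in which case we simply add the coefficients. The resulting sequence $x$ lies in $\ell^1$ with $\|x\|_1\le 2$ and satisfies $B\,x=y$. Alternatively, we could note that $B$ maps the unit ball onto a set whose closure contains the unit ball of $Y$ and invoke the standard open mapping lemma. Either way, $B$ is surjective, and by the open mapping theorem $Y\cong \ell^1/\cN(B)$.

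For the equivalence, first suppose $\cN(B)$ is complemented by a closed $V$, so $\ell^1=\cN(B)\oplus V$. Then $B|_V:V\to Y$ is a bijective bounded operator, hence an isomorphism, so $Y\cong V$. The space $V$ is a complemented subspace of $\ell^1$. By Pełczyński's theorem (every infinite-dimensional complemented subspace of $\ell^1$ is isomorphic to $\ell^1$), $V\cong\ell^1$, provided $V$ is infinite-dimensional. Here a finite-dimensional $Y$ causes a subtlety: in that case $\cN(B)$ has finite codimension and is always complemented. So the statement tacitly assumes $Y$ infinite-dimensional (a unit sphere having a dense sequence "generic" in the Banach–Mazur sense). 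I would state this assumption explicitly.

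Conversely, suppose $Y\cong\ell^1$. Since $\ell^1$ has the lifting property (it is projective), there is a bounded linear $S:Y\to\ell^1$ with $B\,S=I_Y$. Concretely, we take an isomorphism $J:\ell^1\to Y$, and for each unit vector $e_n$ we pick $x^{(n)}\in\ell^1$ with $B\,x^{(n)}=J e_n$ and $\|x^{(n)}\|\le 2\|J\|$, using the norm bound from the first step. We then define $S\,J a:=\sum_n a_n x^{(n)}$, which is bounded because the series converges absolutely in $\ell^1$. Then $P:=S\,B$ is a bounded projection with kernel $\cN(B)$, so $\cN(B)$ is complemented by $\cR(S)$.

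The main obstacle is the direction "complemented implies $Y\cong\ell^1$", which rests on Pełczyński's deep result on complemented subspaces of $\ell^1$. I would cite it rather than prove it, together with the remark on excluding finite-dimensional $Y$. The remaining steps are routine.
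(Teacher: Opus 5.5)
Your proof is correct. It differs from the paper's in one real respect, and it also catches a genuine omission in the statement.

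For surjectivity and for the direction ``$\cN(B)$ complemented $\Rightarrow Y\cong\ell^1$'', the paper only cites Albiac--Kalton. You supply the standard arguments instead: successive approximation with the bound $\|x\|_{\ell^1}\le 2\|y\|_Y$, then the bounded inverse theorem together with Pe{\l}czy\'nski's theorem on complemented subspaces of $\ell^1$.

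The real difference is in the converse direction. The paper starts from an isomorphism $\tilde S:\ell^1\to Y$ and normalizes the basis $f^{(l)}=\tilde S e^{(l)}$. It then uses Megginson's small-perturbation results (Prop.~4.3.2/4.3.4) to extract a subsequence $(\zeta^{(k_l)})$ equivalent to the unit vector basis of $\ell^1$. This makes $B$ an isomorphism from the coordinate subspace $U=\overline{\mathrm{span}}\{e^{(k_l)}:\,l\in\bbN\}$ onto $Y$, so $U$ is an explicit complement of $\cN(B)$.

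You instead use projectivity of $\ell^1$. The quantitative bound from your surjectivity step lets you lift each $Je_n$ with norm at most $2\|J\|$. This gives a bounded right inverse $S$, and $SB$ is a bounded projection with kernel $\cN(B)$.

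What each route buys:
\begin{itemize}
\item Yours is shorter and self-contained. It uses nothing about $B$ beyond its being a surjection onto a space isomorphic to $\ell^1$.
\item The paper's exploits the Mazur structure to produce a complement spanned by unit vectors. The same kind of subspace reappears in Proposition~\ref{th:restriction_mazur} as a quasi-complement, which fits the paper's restriction viewpoint.
\end{itemize}

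Your caveat about finite-dimensional $Y$ is right and points to a real gap in the statement as written. A finite-dimensional $Y$ is separable and admits Mazur-type operators. Then $\cN(B)$ has finite codimension and is complemented, yet $Y\not\cong\ell^1$. So the ``only if'' part needs $\dim Y=\infty$, which the paper assumes only implicitly through the cited corollary.
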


\begin{proof}
Surjectivity is shown in \cite[proof of Theorem 2.3.1]{AlbKal06}. That $\cN(B)$ is uncomplemented if $Y$ is not isomorphic to $\ell^1$ is a consequence of \cite[Corollary 2.3.3]{AlbKal06}. It remains to show that $\cN(B)$ is complemented if $Y$ is isomorphic to $\ell^1$.
\par
Let $Y$ be isomorphic to $\ell^1$ and denote by $\tilde{S}$ an isomorphism from $\ell^1$ onto $Y$. Moreover, set $f^{(l)}:=\tilde{S}\,e^{(l)}$ for $l\in\bbN$, where $e^{(l)}$ has a one at position $l$ and zeros else. Then $(f^{(l)})_{l\in\bbN}$ is a basis in $Y$ equivalent to $(e^{(l)})_{l\in\bbN}$ in $\ell^1$ (cf.\ \cite[Prop.~4.3.2]{Meg98}). In particular, we have
\begin{equation}
Y=\left\{\sum_{l=1}^\infty a_l\,f^{(l)}:\;a\in\ell^1\right\}
\end{equation}
(cf.\ \cite[Def.~4.3.1]{Meg98}). Without loss of generality we may assume that $(f^{(l)})_{l\in\bbN}$ is normalized, because there are constants $c>0$ and $C>0$ such that
\begin{equation}
c=c\|e^{(l)}\|_{\ell^1}\leq\|f^{(l)}\|_Y\leq C\|e^{(l)}\|_{\ell^1}=C\quad\text{for }l\in\bbN.
\end{equation}
Normalization of the basis yields
\begin{equation}
\sum_{l=1}^\infty a_l\,f^{(l)}=\sum_{l=1}^\infty a_l\,\|f^{(l)}\|_Y\,\frac{f^{(l)}}{\|f^{(l)}\|_Y}
\end{equation}
for coefficients $a_1,a_2,\ldots$. Thus, from \cite[Theorem~4.3.6]{Meg98} we see that the normalized basis is equivalent to $(e^{(l)})_{l\in\bbN}$, too.
\par
By \cite[Prop.~4.3.4, Prop.~4.3.2]{Meg98} we find a sequence $(k_l)_{l\in\bbN}$ in $\bbN$ and an isomorphism $S:Y\to Y$ such that $(\zeta^{(k_l)})_{l\in\bbN}$ is a Schauder basis in $Y$ and $\zeta^{(k_l)}=S\,f^{(l)}$ for $l\in\bbN$. Thus, $(\zeta^{(k_l)})_{l\in\bbN}$ is equivalent to $(e^{(l)})_{l\in\bbN}$, too, resulting in
\begin{equation}
Y=\left\{\sum_{l=1}^\infty a_l\,\zeta^{(k_l)}:\;a\in\ell^1\right\}.
\end{equation}
Set
\begin{equation}
U:=\overline{\mathrm{span}\,\{e^{(k_l)}:\;l\in\bbN\}}^{\ell^1}.
\end{equation}
Obviously, $U$ is complemented and isomorphic to $\ell^1$ and the restriction $A:=B|_U$ is surjective. For $x\in\ell^1$ we find $u\in U$ with $A\,u=B\,x$, that is, $x-u\in\cN(B)$. The resulting decomposition $x=x-u+u$ shows $\ell^1=\cN(B)+U$. Injectivity of $A$ (cf.\ proof of Proposition~\ref{th:restriction}) implies $\cN(B)\cap U=\{0\}$. This indicates that the subspace $U$ is a complement of $\cN(B)$ in $\ell^1$, which completes the proof of the proposition.
\end{proof}

Due to their uncomplemented null-space Mazur-type operators are ill-posed and regularization techniques have to be employed for obtaining stable (approximate) solutions to operator equations $B\,x=y$. In \cite{FleHof26} it was shown that Mazur-type operators are not accessible to Tikhonov regularization
\begin{equation}
\|B\,x-y\|_Y^p+\alpha\,\|x\|_{\ell^1}\to\min_{x\in\ell^1}
\end{equation}
with $p\geq 1$ and regularization parameter $\alpha>0$, because Mazur-type operators are not weak*-to-weak continuous. In particular, it was shown there (cf.\ \cite[Theorem~3.3]{FleHof26}) that Tikhonov regularized solutions exist only for a dense subset of right-hand sides $y\in Y$.

By applying Theorem~\ref{th:restriction} to a Mazur-type operator $B$, we obtain an injective bounded linear operator $A$ with non-closed range, which in principle is accessible to Tikhonov regularization. The next proposition shows that indeed such $A$ is weak*-to-weak continuous. Moreover, the quasi-complement $U$ of the null-space $\cN(B)$ required in Theorem~\ref{th:restriction} can be made explicit in terms of the defining sequence $(\zeta^{(k)})_{k\in\bbN}$ of the Mazur-type operator under consideration. Weak*-to-weak continuity will be based on the existence of a \emph{shrinking} basis in $Y$, see, e.g., \cite[Def.~3.2.5]{AlbKal06}. Each reflexive Banach space has a shrinking basis, \cite[Theorem~3.2.13]{AlbKal06}.
For separable Hilbert spaces $Y$, the proposition was already proven in \cite[Proposition~4]{HofFle26} .

\begin{proposition}\label{th:restriction_mazur}
For each Mazur-type operator $B:\ell^1\to Y$ mapping onto a separable Banach space $Y$ there is a closed and complemented subspace $U\subseteq\ell^1$ isomorphic to $\ell^1$ such that $A:=B|_U$ is injective, the range of $A$ is dense in $Y$, and $U$ is a quasi-complement of $\cN(B)$. If $Y$ has a shrinking basis, then $A$ is weak*-to-weak continuous w.\,r.\,t.\ the predual $c_0$ of $\ell^1$.
\end{proposition}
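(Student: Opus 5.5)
The plan is to take $U$ to be the closed span of a suitably chosen subsequence of unit vectors, $U:=\overline{\mathrm{span}\,\{e^{(k_l)}:\;l\in\bbN\}}^{\ell^1}$. Choosing the indices $k_1<k_2<\cdots$ will make $(\zeta^{(k_l)})_{l\in\bbN}$ a small perturbation of a well-behaved biorthogonal system in $Y$. For any choice of indices, $U$ is closed, isomorphic to $\ell^1$ via $u\mapsto(u_{k_l})_{l\in\bbN}$, and complemented by the coordinate projection $x\mapsto\sum_l x_{k_l}e^{(k_l)}$. It then remains to prove three things: $A:=B|_U$ is injective, $\cR(A)$ is dense, and $U$ is a quasi-complement of $\cN(B)$.

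For the choice of indices I would use a bounded normalized Markushevich basis $(f^{(l)},f^{*(l)})_{l\in\bbN}$ of $Y$ with $\sup_l\|f^{*(l)}\|\le M$; such a basis exists in every separable Banach space by the Ovsepian--Pe{\l}czy\'nski theorem. If $Y$ has a shrinking basis, I would instead take that basis, normalized, together with its coordinate functionals, which are uniformly bounded by twice the basis constant. Fix $\varepsilon_l>0$ with $M\sum_l\varepsilon_l<1$. Since $(\zeta^{(k)})_{k\in\bbN}$ is dense in the unit sphere, every point of the sphere is approximated by infinitely many $\zeta^{(k)}$. Hence we can pick strictly increasing $k_l$ with $\|\zeta^{(k_l)}-f^{(l)}\|\le\varepsilon_l$.

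Define $T:Y\to Y$ by $Ty:=\sum_l f^{*(l)}(y)\,(\zeta^{(k_l)}-f^{(l)})$. Then $\|T\|\le M\sum_l\varepsilon_l<1$, so $I+T$ is an isomorphism, and it maps $f^{(l)}$ to $\zeta^{(k_l)}$. The span of the $f^{(l)}$ is dense, so the span of the $\zeta^{(k_l)}$, which lies in $\cR(A)$, is dense in $Y$. For injectivity, let $A u=\sum_l u_{k_l}\zeta^{(k_l)}=0$ and apply $f^{*(j)}$. This gives $|u_{k_j}|\le M\sum_l|u_{k_l}|\varepsilon_l\le M\bigl(\sum_l\varepsilon_l\bigr)\sup_l|u_{k_l}|$, and taking the supremum over $j$ forces $u=0$.

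For the quasi-complement property, $\cN(B)\cap U=\{0\}$ is exactly injectivity of $A$. Density of $\cN(B)+U$ I would prove exactly as in the second half of the proof of Theorem~\ref{th:extension}. $B$ is surjective, so Lemma~\ref{th:bartle_graves} provides a continuous right inverse $B^\dagger$. Given $x\in\ell^1$, choose $u^{(m)}\in U$ with $B u^{(m)}\to Bx$. Then $x-u^{(m)}-B^\dagger(Bx)+B^\dagger(Bu^{(m)})\in\cN(B)$, and adding $u^{(m)}$ gives a sequence converging to $x$.

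For weak*-to-weak continuity I would identify $U$ with $\ell^1=(c_0)^*$ through the coordinates $(u_{k_l})_l$. A bounded operator $\ell^1\to Y$ is weak*-to-weak continuous if and only if its adjoint maps $Y^*$ into $c_0$, that is, if and only if $y^*(\zeta^{(k_l)})\to0$ for every $y^*\in Y^*$. I expect this step to need the most care, mainly in stating which weak* topology is meant on $U$. The verification itself is short. With $f^{(l)}$ the normalized shrinking basis, we have $f^{(l)}\rightharpoonup0$, since the coordinate functionals span $Y^*$ densely and the $f^{(l)}$ are bounded. Moreover $|y^*(\zeta^{(k_l)}-f^{(l)})|\le\|y^*\|\varepsilon_l\to0$. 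Together these give $y^*(\zeta^{(k_l)})\to0$.
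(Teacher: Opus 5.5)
Your proof is correct, and its skeleton matches the paper's. In both, $U$ is the closed span of a subsequence of unit vectors, chosen so that $\zeta^{(k_l)}$ is a small perturbation of a normalized system $(f^{(l)})$. An isomorphism of $Y$ then carries $f^{(l)}$ to $\zeta^{(k_l)}$. The quasi-complement property comes from the Bartle--Graves argument of Theorem~\ref{th:extension}, and weak*-to-weak continuity reduces to $\zeta^{(k_l)}\rightharpoonup 0$.

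The difference is the key ingredient. The paper starts from a normalized Schauder basis of $Y$ and invokes Megginson's principle of small perturbations to conclude that $(\zeta^{(k_l)})$ is itself a Schauder basis equivalent to $(f^{(l)})$. Density of $\cR(A)$ then follows by truncating basis expansions, and injectivity follows from uniqueness of expansions. You instead use a bounded Markushevich basis and build the isomorphism $I+T$ by hand with a Neumann-series estimate. You get density as the image of a dense span, and injectivity from the biorthogonal functionals.

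Your route buys generality. The paper's closing remark that every Banach space has a normalized basis is false for Schauder bases: Enflo's space is separable with no Schauder basis. So the paper's argument for the first assertion really covers only spaces $Y$ that have a basis. The Ovsepian--Pe{\l}czy\'nski theorem makes your construction work for every separable $Y$. You lose only the fact that $(\zeta^{(k_l)})$ is a Schauder basis, which the proposition never uses. In the shrinking-basis case the two constructions essentially coincide.

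Your concern about the weak* topology on $U$ is harmless. $U$ is weak*-closed in $\ell^1$, and its relative weak* topology is exactly the one transported by the coordinate map $u\mapsto(u_{k_l})_{l}$.
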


\begin{proof}
Let $(\zeta^{(k)})_{k\in\bbN}$ be the sequence defining $B$ and take a normalized Schauder basis $(f^{(k)})_{k\in\bbN}$ in $Y$. Then by \cite[Prop.~4.3.4, Prop.~4.3.2]{Meg98} we find a sequence $(k_l)_{l\in\bbN}$ in $\bbN$ and an isomorphism $S:Y\to Y$ such that $(\zeta^{(k_l)})_{l\in\bbN}$ is a Schauder basis in $Y$ and $\zeta^{(k_l)}=S\,f^{(l)}$ for $l\in\bbN$.
\par
Set
\begin{equation}
U:=\overline{\mathrm{span}\,\{e^{(k_l)}:\;l\in\bbN\}}^{\ell^1}.
\end{equation}
Obviously, $U$ is complemented and isomorphic to $\ell^1$. Further, $A:=B|_U$ is injective: for $u=\sum_{l=1}^\infty u_l\,e^{(k_l)}\in U$ with $A\,u=0$ we have
\begin{equation}
0=B\,u
=\sum_{l=1}^\infty u_l\,\zeta^{(k_l)}
=\sum_{l=1}^\infty u_l\,S\,f^{(l)}
=S\left(\sum_{l=1}^\infty u_l\,f^{(l)}\right)
\end{equation}
and, thus, $\sum_{l=1}^\infty u_l\,f^{(l)}=0$, which implies $u=0$.
To show that $\cR(A)$ is dense in $Y$ take $y\in Y$ and decompose it w.\,r.\,t.\ the basis $(\zeta^{(k_l)})_{l\in\bbN}$:
\begin{equation}
y=\sum_{l=1}^\infty a_l\,\zeta^{(k_l)}
\end{equation}
with some sequence of scalars $a_1,a_2,\ldots$. Set
\begin{equation}
u^{(m)}=\sum_{l=1}^m a_l\,e^{(k_l)}\in U.
\end{equation}
Then $A\,u^{(m)}=\sum_{l=1}^m a_l\,\zeta^{(k_l)}\to y$ if $m\to\infty$.
\par
$\cN(B)\cap U=\{0\}$ and density of $\cN(B)+U$ in $\ell^1$ can be proven in the same way as in the proof of Theorem~\ref{th:extension}.
\par
To obtain weak*-to-weak continuity, due to \cite[Lemma~9.3]{Flemmingbuch18} it suffices to show that $B\,e^{(k_l)}$ converges weakly to zero for $l\to\infty$. We have $B\,e^{(k_l)}=\zeta^{(k_l)}=S\,f^{(l)}$ and by assumption $(f^{(l)})_{l\in\bbN}$ is a shrinking basis. Thus, by \cite[Prop.~3.2.7]{AlbKal06} $(f^{(l)})_{l\in\bbN}$ converges weakly to zero in $Y$. Because bounded linear operators are always weak-to-weak continuous, this proves the assertion.
\par
Note that the construction of $U$ is based on the fact that each Banach space has a normalized basis $(f^{(l)})_{l\in\bbN}$. But in the proposition we require existence of a shrinking basis without being normalized. In fact, normalizing a shrinking basis yields a shrinking basis again. This can be seen, for instance, from \cite[Prop.~4.4.7, Prop.~4.1.5]{Meg98}.
\end{proof}

Note that Mazur-type operators are not only an example to which Corollary~\ref{th:cor_restriction} can be applied. But they may also serve as a canonical surjection $C$ in Theorem~\ref{th:extension} if $W=\ell^1$. Choosing $W\neq Y$ provides some control over the properties of the extended domain $\tilde{X}$ of the operator $B$. If $Y$ is not isomorphic to $\ell^1$ this choice of $W$ always yields an uncomplemented null-space $\cN(B)$ by Proposition~\ref{th:extension_compl}.

\section{Discussion}

Corollary~\ref{th:cor_restriction} and Theorem~\ref{th:extension} show that there is a tight one-to-one correspondence between injective operators $A$ with dense range und surjective operators $B$. In both cases, $A$ is a restriction of $B$ and $B$ is an extension of $A$ (up to the isomorphism $T$). The operator $A$ can be regarded as the \emph{essential part} of $B$ in the sense that the domain of $A$ is a quasi-complement of the null-space of $B$.

With the results presented in this article, one may transfer ill-posedness due to a non-closed range to ill-posedness expressed in terms of a relationship between the null-space and one of its quasi-complements, and vice versa. See Remark~\ref{rm:compl} for a more detailed interpretation of this duality.

Whether characterizing ill-posedness in terms of properties of the null-space offers added value depends on whether future ideas emerge on how to algorithmically overcome ill-posedness caused by a uncomplemented null space.
 The Bartle-Graves Theorem formulated in Lemma~\ref{th:bartle_graves} always yields a (possibly nonlinear) continuous inverse, even if there is no bounded linear inverse. Attempting to algorithmically approximate such a Bartle-Graves inverse could prove to be a promising path and a rich source of new regularization methods. But at the moment for the authors it is not clear how to tackle this task.

Transfering ill-posedness to the null-space may even yield new regularization methods in Hilbert spaces. The resulting surjective operator has a continuous Bartle-Graves inverse, which might become accessible to algorithmic evaluation or approximation in future. To transfer ill-posedness of an operator $A:X\to Y$ between Hilbert spaces to the null-space in Theorem~\ref{th:extension} one may choose $W:=Y$, $C$ the identity mapping in $Y$, and $\tilde{X}:=Y\times X$ with norm
\begin{equation}
\|(w,u)\|_{\tilde{X}}:=\sqrt{\|w\|_W^2+\|x\|_X^2.}
\end{equation}
Then the domain $\tilde{X}$ of the extension $B$ is a Hilbert space again. Remember Remark~\ref{rm:compl}, which states that although each closed subspace of a Hilbert space is complemented there might be quasi-complements which are not complements. Thus, even in Hilbert spaces the null-space $\cN(B)$ may have a quasi-complement $U$ which is not a complement. In this sense $\cN(B)$ can be regarded as `uncomplemented with respect to $U$'.

Next to Mazur-type operators the authors currently are not aware of any other class of operators with uncomplemented null-space. Of particular interest are practically relevant operators with this property.

\bibliography{quasicomplemented}

\end{document}